\newdimen\symskip
\newdimen\defskip
\newdimen\parind
\newdimen\leftmarge
\newdimen\theoremshape
\newcommand*{\клей}{\nobreak\hskip\z@skip}
\newcommand{\?}{\,\nobreak\hskip0pt}
\renewcommand{\:}{\textup{:}}
\renewcommand{\~}{\textup{;}}
\DeclareRobustCommand*{\т}{~\textemdash{} }
\DeclareRobustCommand*{\д}{\клей\hbox{-}\клей}
\newcommand{\no}{}
\renewcommand{\@listI}{\settowidth\labelwidth{\labheadi{\no}}\listipar{\parind}{\labelwidth}}
\newcommand{\listivpar}{\topsep\defskip\partopsep0pt\parsep-\parskip\itemsep0.5\topsep}
\newcommand{\listipar}[2]{\rightmargin0pt\leftmargin#1\labelsep#1\advance\labelsep-#2\itemindent0pt\listivpar}
\renewcommand{\@listii}{\settowidth\labelwidth{\labheadii{\@roman{\no}}}\listiipar{\parind}{\labelwidth}}
\newcommand{\listiivpar}{\topsep0.5\defskip\partopsep0pt\parsep-\parskip\itemsep0.5\topsep}
\newcommand{\listiipar}[2]{\rightmargin0pt\leftmargin#1\labelsep#1\advance\labelsep-#2\itemindent0pt\listiivpar}
\def\thempfn{\ifcase\value{footnote}1\or *\or **\or ***\else\@ctrerr\fi}
\renewcommand\footnoterule{%
  \kern-3\p@
  \hrule\@width1in
  \kern2.6\p@}
\renewcommand{\@biblabel}[1]{[#1]}
\renewenvironment{thebibliography}[1]
     {\renewcommand{\refname}{Литература}%
      \section*{\refname}%
      \@mkboth{\MakeUppercase\refname}{\MakeUppercase\refname}%
      \list{\@biblabel{\@arabic\c@enumiv}}%
           {\itemsep\baselineskip
            \leftmargin\parind
            \settowidth\labelwidth{\@biblabel{#1}}%
            \labelsep\parind\advance\labelsep-\labelwidth
            \@openbib@code
            \usecounter{enumiv}%
            \let\p@enumiv\@empty
            \renewcommand\theenumiv{\@arabic\c@enumiv}}%
      \sloppy
      \clubpenalty4000
      \@clubpenalty\clubpenalty
      \widowpenalty4000%
      \sfcode`\.\@m}
     {\def\@noitemerr
       {\@latex@warning{Empty `thebibliography' environment}}%
      \endlist}
\def\@maketitle{%
  \newpage%\null
  \vskip0.5em%
  УДК \udk%
  \vskip0.5em%
  MSC \msc%
  \vskip1em%
  \begin{center}\bf%
  \let\footnote\thanks%
   %{\Large\lineskip .5em\begin{tabular}[t]{c}\@author\end{tabular}\par}%
   {\Large\@author\par}%
   \vskip1.5em%
   {\LARGE\@title\par}%
   \vskip1em%
   {\large\@date}%
  \end{center}%
  \par
  \vskip1.5em}
\renewcommand\sectionmark[1]{%
 \markright{%
  \ifnum \c@secnumdepth >\z@
   \thesection. \ %
  \fi
 #1}}%
\renewcommand{\section}{\@startsection{section}{1}{0pt}%
{5.5ex plus .5ex minus .2ex}{1.5ex plus .3ex}%
{\center\normalfont\Large\bfseries\sffamily\bom}}
\renewcommand{\subsection}{\@startsection{subsection}{2}{0pt}%
{4.5ex plus .4ex minus .2ex}{0.75ex plus .2ex}%
{\center\normalfont\large\bfseries\sffamily\bom}}
\renewcommand{\subsubsection}{\@startsection{subsubsection}{3}{0pt}%
{2.5ex plus .5ex minus .2ex}{1ex plus .2ex}%
{\center\normalfont\bfseries\scshape\bom}}
\newcommand{\Ss}{\textup{\S\,}}
\newcommand{\Sss}{\textup{\S\S\,}}
\def\@postskip@{\hskip.5em\relax}
\def\postsection{.\@postskip@}
\def\postsubsection{.\@postskip@}
\def\postsubsubsection{.\@postskip@}
\def\postparagraph{.\@postskip@}
\def\postsubparagraph{.\@postskip@}
\def\@seccntformat#1{\csname pre#1\endcsname\csname the#1\endcsname\csname post#1\endcsname}
\renewcommand{\thesection}{\textup{\arabic{section}}}
\newcommand{\parr}{\par\addvspace{\defskip}}
\newcommand{\theo}[2]{\newtheorem{#1}{#2}[section]}
\newcommand{\deff}[2]{\newenvironment{#1}{\parr\textbf{#2.}}{\parr}}
\newenvironment{cass}[1]{\begin{cas}\label{#1}\upshape}{\end{cas}}
\def\@begintheorem#1#2[#3]{%
  \deferred@thm@head{\the\thm@headfont \thm@indent
    \@ifempty{#1}{\let\thmname\@gobble}{\let\thmname\@iden}%
    \@ifempty{#2}{\let\thmnumber\@gobble}{\let\thmnumber\@iden}%
    \@ifempty{#3}{\let\thmnote\@gobble}{\let\thmnote\@iden}%
    \thm@notefont{\bfseries\upshape}%
    \indent%
    \thm@swap\swappedhead\thmhead{#1}{#2}{#3}%
    \the\thm@headpunct
    \thmheadnl % possibly a newline.
    \hskip\thm@headsep
  }%
  \ignorespaces}
\renewenvironment{proof}{\parr\pushQED{\qed}\normalfont$\square\quad$}{\popQED\@endpefalse\parr}
\newcommand{\labheadu}[1]{\textup{#1.}}
\newcommand{\labheadi}[1]{\textup{#1)}}
\newcommand{\labheadii}[1]{\textup{(#1)}}
\newcommand{\labhi}[1]{\labheadi{\arabic{#1}}}
\newcommand{\labhii}[1]{\labheadii{\roman{#1}}}
\newenvironment{nums}[1]{\renewcommand{\no}{#1}\begin{enumerate}}{\end{enumerate}}
\newcommand{\eqn}[1]{\begin{equation}#1\end{equation}}
\def\LT@makecaption#1#2#3{%
  \LT@mcol\LT@cols c{\hbox to\z@{\hss\parbox[t]\LTcapwidth{%
    \sbox\@tempboxa{#1{#2. }#3}%
    \ifdim\wd\@tempboxa>\hsize
      #1{#2. }#3%
    \else
      \hbox to\hsize{\hfil\box\@tempboxa\hfil}%
    \fi
    \endgraf\vskip\baselineskip}%
  \hss}}}
\newcommand{\high}{\vph{\Br{}_0^0}}
\newcounter{numt}
\newcounter{col}
\newcounter{coll}
\newcommand{\mt}[3]{\multicolumn{#1}{#2}{#3}}
\renewcommand{\thenumt}{\textup{\arabic{numt})}}
\newcommand{\news}{\\\hline}
\newcommand{\refs}{\refstepcounter{numt}}
\newcommand{\an}[1]{\refs\label{#1}\thenumt&}
\newcommand{\n}[1]{\news\an{#1}}
\newcommand{\refcol}[1]{\addtocounter{col}{#1}}
\newcommand{\fmc}{\news\mt{1}{|c|}{$\high$ №}}
\newcommand{\mc}[2]{&\mt{#1}{>{$}c<{$}@{\refcol{#1}}|}{#2}}
\newcommand{\nc}[1]{\mc{1}{#1}}
\newcommand{\mtc}[1]{\mt{\value{coll}}{c}{#1}}
\newcommand{\lont}[5]{%
\setcounter{col}{1}\setcounter{numt}{0}
\begin{longtable}{|r|#1}\caption{#2}\label{#3}\fmc#4
\setcounter{coll}{\value{col}}\news\endfirsthead
\mtc{Продолжение табл.~\ref{#3}.}\\
\mtc{}
\fmc#4\news\endhead
\mtc{}\\
\mtc{\textit{Продолж. на сл. стр.}}
\endfoot
\endlastfoot
#5\news\end{longtable}}
\renewcommand{\ge}{\geqslant}
\renewcommand{\le}{\leqslant}
\newcommand{\fa}{\,\forall\,}
\newcommand{\exi}{\,\exists\,}
\newcommand{\es}{\varnothing}
\newcommand{\subs}{\subset}
\newcommand{\sm}{\setminus}
\newcommand{\cln}{\colon}
\newcommand{\Ra}{\Rightarrow}
\newcommand{\os}[1]{\overset{#1}}
\newcommand{\Inn}[1]{\smash{\os{\circ}{\smash{#1}\vph{^{_{^{_{c}}}}}}}\vph{#1}}
\newcommand{\wt}{\widetilde}
\newcommand*{\bw}[1]{#1\nobreak\discretionary{}{\hbox{$\mathsurround=0pt #1$}}{}}
\newcommand{\sco}{,\ldots,}
\newcommand{\seq}{\bw=\ldots\bw=}
\newcommand{\sop}{\bw\oplus\ldots\bw\oplus}
\newcommand{\sot}{\bw\otimes\ldots\bw\otimes}
\newcommand{\ha}[1]{\left\langle#1\right\rangle}
\newcommand{\ba}[1]{\bigl\langle#1\bigr\rangle}
\newcommand{\br}[1]{\bigl(#1\bigr)}
\newcommand{\Br}[1]{\Bigl(#1\Bigr)}
\newcommand{\ter}[1]{\textup{(}#1\textup{)}}
\newcommand{\bc}[1]{\bigl\{#1\bigr\}}
\newcommand{\mbb}{\mathbb}
\newcommand{\mbf}{\mathbf}
\newcommand{\mcl}{\mathcal}
\newcommand{\mfr}{\mathfrak}
\newcommand{\R}{\mbb{R}}
\newcommand{\N}{\mbb{N}}
\newcommand{\Cbb}{\mbb{C}}
\newcommand{\Hbb}{\mbb{H}}
\newcommand{\Eb}{\mbb{E}}
\newcommand{\ib}{\mbf{i}}
\newcommand{\Pc}{\mcl{P}}
\newcommand{\ggt}{\mfr{g}}
\newcommand{\hgt}{\mfr{h}}
\newcommand{\tgt}{\mfr{t}}
\newcommand{\pd}{\partial}
\newcommand{\al}{\alpha}
\newcommand{\be}{\beta}
\newcommand{\de}{\delta}
\newcommand{\De}{\Delta}
\newcommand{\la}{\lambda}
\newcommand{\La}{\Lambda}
\newcommand{\ph}{\varphi}
\newcommand{\Om}{\Omega}
\DeclareMathOperator{\ad}{ad}
\DeclareMathOperator{\Mat}{Mat}
\DeclareMathOperator{\rk}{rk}
\newcommand{\GL}{\mbf{GL}}
\newcommand{\glg}{\mfr{gl}}
\newcommand{\slg}{\mfr{sl}}
\newcommand{\sog}{\mfr{so}}
\newcommand{\un}{\mfr{u}}
\newcommand{\sug}{\mfr{su}}
\newcommand{\spg}{\mfr{sp}}
\newcommand{\bom}{\boldmath}
\newcommand{\vph}[1]{\vphantom{#1}}
\newcommand{\thra}{\twoheadrightarrow}
\begin{document}

\author{О.\,Г.\?Стырт}
\title{О~простейших стационарных подалгебрах\\
для компактных линейных алгебр Ли}
\date{}
\newcommand{\udk}{512.815.1+512.815.6+512.816.1}
\newcommand{\msc}{22E46+17B10+17B20+17B45}

\maketitle

{\leftskip\parind\rightskip\parind
Получены некоторые достаточные условия существования вектора с~одномерной либо простой трёхмерной стационарной подалгеброй для неприводимой компактной линейной алгебры Ли.

\smallskip

\textbf{Ключевые слова\:} компактная линейная алгебра Ли, система корней, схема Дынкина, стационарная подалгебра общего положения (с.п.о.п.).

\smallskip

Some sufficient conditions of existence of a~vector with one-dimensional or simple three-dimensional stationary subalgebra for an irreducible compact linear Lie algebra are obtained.

\smallskip

\textbf{Key words\:} compact linear Lie algebra, root system, Dynkin scheme, stationary subalgebra in general position (s.s.g.p.).\par}

\section{Введение}\label{introd}

Пусть $V_{\R}$\т вещественное векторное пространство, а~$\ggt_{\R}\subs\slg(V_{\R})$\т неприводимая компактная линейная алгебра Ли ранга $r>1$, имеющая простой коммутант.

Нас будет интересовать вопрос о~том, существует ли вектор $v\in V_{\R}$, стационарная подалгебра которого изоморфна одной из алгебр $\R$ и~$\sug_2$, т.\,е. выполняется ли условие
\eqn{\label{elem}
\exi v\in V_{\R}\cln\quad\quad\rk\br{(\ggt_{\R})_v}=1.}

Данная задача является вспомогательной для проблемы нахождения всех компактных линейных групп, топологический фактор действия которых гомеоморфен клетке. К~настоящему моменту разобраны случаи группы с~коммутативной связной компонентой~\cite{My1} и~простой трёхмерной группы~\cite{My2,My3}. Произвольное же линейное представление, обладающее вектором со стабилизатором ранга~$1$, планируется свести к~соответствующему слайс-представлению при помощи теоремы о~слайсе~\cite[гл.\,II,~\Sss4---5]{Bredon}.

Мы будем пользоваться следующими обозначениями, связанными с~представлениями комплексных редуктивных алгебр Ли. Представление произвольной комплексной редуктивной алгебры Ли, сопряжённое представлению~$R$, обозначим через~$R'$. При обозначении прямой суммы представлений будем для удобства пользоваться знаком <<$+$>> вместо знака <<$\oplus$>>.

Обозначим через~$V$ комплексное пространство $V_{\R}(\Cbb)$, а~через~$\ggt$\т комплексную редуктивную линейную алгебру Ли $\ggt_{\R}(\Cbb)\subs\br{\slg(V_{\R})}(\Cbb)=\slg(V)$, имеющую согласно предположению простой коммутант. Для некоторого точного неприводимого представления~$R$ алгебры Ли~$\ggt$ тавтологическое представление $\ggt\cln V$ совпадает с~одним из представлений $R$ и~$R+R'$. Таким образом, мы можем говорить о~неприводимой линейной алгебре Ли $R(\ggt)$, изоморфной алгебре Ли~$\ggt$ как абстрактная алгебра, и~о~неприводимой простой линейной алгебре Ли $R\br{[\ggt,\ggt]}$.

Если линейная алгебра Ли $\ggt\subs\slg(V)$ приводима, то пространство~$V_{\R}$ обладает $\ggt_{\R}$\д инвариантной комплексной структурой, в~результате чего естественным образом возникает комплексная линейная алгебра Ли
$\ggt=\ggt_{\R}(\Cbb)=\ggt_{\R}\oplus i\ggt_{\R}\subs\glg_{\Cbb}(V_{\R})$.

Говоря о~неразложимых системах простых корней, мы будем использовать стандартную нумерацию простых корней (см., например, \cite[табл.\,1]{VO}, \cite[табл.\,1]{Elsh}) и~обозначать $i$\д й базисный вес через~$\ph_i$.

В~настоящей работе будут доказаны теоремы \ref{main}---\ref{main2}.

\begin{theorem}\label{main} Допустим, что условие~\eqref{elem} не выполняется, а~\ter{простая} линейная алгебра Ли $R\br{[\ggt,\ggt]}$ не является ни присоединённой, ни классической.
\begin{nums}{1}
\renewcommand{\labhi}[1]{\labheadu{\arabic{#1}}}
\renewcommand{\labhii}[1]{\labheadi{\arabic{#1}}}
\item Если линейная алгебра $\ggt\subs\slg(V)$ неприводима, то она совпадает с~одной из следующих линейных алгебр\:
\begin{nums}{-1}
\item $\ph_4(A_7)$, $(\ph_1+\ph_2+\ph_3)(A_3)$\~
\item $\ph_r(B_r)$, $(\ph_1+\ph_r)(B_r)$, $(\ph_2+\ph_r)(B_r)$ \ter{$r=3,4$}\~
\item $\ph_3(B_4)$, $(\ph_1+\ph_2+\ph_3)(B_3)$\~
\item $\ph_2(C_r)$ \ter{$r>2$}\~
\item $(\ph_1+\ph_3)(C_3)$, $(\ph_1+\ph_2+\ph_3)(C_3)$, $\ph_4(C_4)$, $(\ph_2+\ph_4)(C_4)$, $\ph_4(C_5)$\~
\item $\ph_8(D_8)$\~
\item $(\ph_1+\ph_r)(D_r)$ \ter{$r=4,8$}\~
\item $\ph_2(E_r)$ \ter{$r=7,8$}\~
\item $(\ph_1+\ph_{r-1})(E_r)$ \ter{$r=6,8$}\~
\item $\ph_7(E_8)$\~
\item $\ph_1(F_4)$, $\ph_2(F_4)$, $\ph_3(F_4)$, $(\ph_1+\ph_3)(F_4)$, $(\ph_1+\ph_4)(F_4)$, $(\ph_2+\ph_4)(F_4)$\~
\item $\ph_1(G_2)$.
\end{nums}
\item Если линейная алгебра $\ggt\subs\slg(V)$ приводима, то комплексная линейная алгебра Ли $\ggt\subs\glg_{\Cbb}(V_{\R})$ совпадает с~одной из
следующих линейных алгебр\:
\begin{nums}{-1}
\item $\ph_2(A_r)$ \ter{$r>3$}\~
\item $(\ph_1+\ph_{r-1})(A_r)$ \ter{$r>2$}\~
\item $\ph_3(A_r)$ \ter{$r=5,6,7$}\~
\item $\ph_r(B_r)$, $(\ph_1+\ph_r)(B_r)$, $(\ph_2+\ph_r)(B_r)$ \ter{$r=5,6$}\~
\item $\ph_3(C_3)$, $(\ph_2+\ph_3)(C_3)$, $\ph_3(C_4)$, $(\ph_1+\ph_4)(C_4)$\~
\item $\ph_r(D_r)$, $(\ph_1+\ph_r)(D_r)$ \ter{$r=5,6,7$}\~
\item $\ph_1(E_6)$, $\ph_2(E_6)$, $\ph_1(E_7)$, $\ph_7(E_7)$, $(\ph_1+\ph_6)(E_7)$\~
\item $\ph_2(A_{r-1})\oplus\Cbb E$ \ter{$r>3$}\~
\item $\ph_3(A_{r-1})\oplus\Cbb E$ \ter{$r=6,7$}\~
\item $\ph_{r-1}(B_{r-1})\oplus\Cbb E$ \ter{$r=4,5,6$}\~
\item $(\ph_1+\ph_{r-1})(B_{r-1})\oplus\Cbb E$ \ter{$r=3\sco6$}\~
\item $\ph_2(C_{r-1})\oplus\Cbb E$ \ter{$r>3$}\~
\item $\ph_3(C_{r-1})\oplus\Cbb E$ \ter{$r=4\sco7$}\~
\item $(\ph_1+\ph_{r-1})(C_{r-1})\oplus\Cbb E$ \ter{$r=3,4$}\~
\item $\ph_{r-1}(D_{r-1})\oplus\Cbb E$ \ter{$r=6,7,8$}\~
\item $\ph_1(E_{r-1})\oplus\Cbb E$ \ter{$r=7,8$}\~
\item $\ph_1(F_4)\oplus\Cbb E$, $\ph_2(F_4)\oplus\Cbb E$, $(\ph_1+\ph_4)(F_4)\oplus\Cbb E$\~
\item $\ph_1(G_2)\oplus\Cbb E$, $(\ph_1+\ph_2)(G_2)\oplus\Cbb E$.
\end{nums}
\end{nums}
\end{theorem}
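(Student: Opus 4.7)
The plan is a case-by-case analysis organised by the classification of irreducible representations of simple complex Lie algebras, combined with the Elashvili tables of stationary subalgebras in general position (s.s.g.p.).

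First we pass to the complexification $\ggt = \ggt_\R(\Cbb) \subset \slg(V)$. The rank of a real stabiliser $(\ggt_\R)_v$ equals the complex rank of its complexification $\ggt_v$, so condition~\eqref{elem} becomes the question whether there exists $v \in V_\R$ with $\rk(\ggt_v) = 1$. Since $\ggt_\R$ is compact, a generic real vector is also generic as a complex vector, so the question can be translated into one about complex orbits while always keeping track of real points of the orbit strata.

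Next we enumerate the indecomposable cases for $\ggt$. Either $\ggt=[\ggt,\ggt]$ is simple and acts irreducibly on $V$ \ter{Part~1 of the theorem}, or $\ggt$ has a one-dimensional centre $\Cbb E$ and the semisimple part acts reducibly on $V$ \ter{Part~2}. For each candidate representation $R(\ggt)$ from the list in \cite{VO}, we read off the s.s.g.p.\ and its rank from the tables in~\cite{Elsh}. If this rank is already $\le 1$, condition~\eqref{elem} holds at a principal real vector and the representation is discarded from the list.

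The real work is then concentrated on the representations whose s.s.g.p.\ has rank $\ge 2$. For each such candidate we attempt to exhibit a special $v\in V_\R$ whose stabiliser has rank exactly~$1$: either a one-dimensional abelian algebra or a copy of $\sug_2$. The vector $v$ is typically constructed as a sum of weight vectors distributed along the Dynkin scheme of $R$, and $\ggt_v$ is computed from the combinatorics of the weights annihilating~$v$. If such a $v$ can be found, the representation is discarded; otherwise it appears in the list of the theorem. In the reducible case the central $\Cbb E$ already contributes $1$ to the rank of every stabiliser, so one instead looks for $v$ whose stabiliser in the semisimple part is trivial, and the search reduces to the same body of tables.

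The main obstacle is the verification, for each representation in the exceptional lists, that \emph{no} orbit attains a rank-$1$ stabiliser. This requires going well beyond the generic orbit: all non-generic orbit types must be classified and controlled via dimension counts and root-system combinatorics, so that every possible stabiliser is shown to have rank~$0$ or rank~$\ge 2$. The low-rank exceptional cases such as $F_4$ and $G_2$, together with the representations built from sums of several fundamental weights (for instance $(\ph_1+\ph_2+\ph_3)(A_3)$, $(\ph_1+\ph_2+\ph_3)(B_3)$, or the $F_4$-series $(\ph_1+\ph_3)(F_4)$, $(\ph_1+\ph_4)(F_4)$, $(\ph_2+\ph_4)(F_4)$), are expected to require the most delicate combinatorial analysis and to form the technical core of the proof.
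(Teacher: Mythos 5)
Your proposal contains a genuine logical error that invalidates its core strategy, and it also misreads what the theorem asserts. First, the misreading: Theorem~\ref{main} is only a \emph{necessary} condition --- if \eqref{elem} fails, then $\ggt$ is in the list. The paper nowhere needs to verify that ``no orbit attains a rank-$1$ stabiliser'' for the listed representations; what you call the ``technical core'' is not part of this theorem at all. Second, and more seriously, your case split is backwards. For a compact linear group every isotropy group contains a conjugate of the principal one (minimality of the principal orbit type, \cite[Гл.\,II,~\S\S\,4---5]{Bredon}; this is exactly Statement~\ref{rak} of the paper). Hence if the s.s.g.p.\ has rank $\ge2$, \emph{every} stabiliser has rank $\ge2$ and \eqref{elem} fails automatically --- the special vector you propose to ``exhibit'' cannot exist. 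Dually, your claim that rank $\le1$ of the s.s.g.p.\ gives \eqref{elem} at a principal vector is false for rank $0$: there the principal stabiliser has rank $0$, and the entire difficulty is to decide whether some \emph{non}-principal real vector has a rank-$1$ stabiliser. That rank-$0$ case, which your scheme silently discards, is precisely where all the work of the paper lives.

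The paper's actual mechanism is absent from your proposal. For the highest weight $\la$ and each corank-$2$ subsystem of simple roots $\Pi'\in\Pc$, one takes the one-dimensional torus $\Cbb\xi$ annihilating $\ba{\{\la\}\cup\Pi'}$, passes to the reduced algebra $\ggt_0$ (centraliser of $\xi$ modulo $\Cbb\xi$) acting on the fixed subspace $V_0=V^{\xi}$, and proves (Lemma~\ref{tr}) that a nontrivial s.s.g.p.\ of this reduced representation produces $v\in V_{\R}$ with $\xi\in(\ggt_{\R})_v$ and $\rk\br{(\ggt_{\R})_v}=1$. Negating this for all $\Pi'$, and invoking the classification (Lemma~\ref{ome}, the set $\Om$) of representations with trivial s.s.g.p.\ whose first factor has corank $\le1$, yields sharp combinatorial constraints on $\la$ (Lemmas \ref{PP}--\ref{undi}, Theorem~\ref{submain}): the support of $\la$ lies in $\pd_r\Pi$, all labels are $0$ or $1$, etc. The lists of Theorem~\ref{main} are then read off from these constraints together with the tables and the refinements for $B_r$, $C_r$, $D_r$ (Propositions \ref{Bt}--\ref{Dt}, Corollary~\ref{BCD}). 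A table lookup in \cite{Elsh} of generic stabilisers, as you propose, cannot substitute for this rank-reduction argument, because the question concerns non-generic strata.
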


\begin{note} Если \ter{простая} линейная алгебра Ли $R\br{[\ggt,\ggt]}$ является присоединённой, то возможны следующие случаи\:
\begin{nums}{-1}
\item линейная алгебра $\ggt_{\R}\subs\slg(V_{\R})$ является присоединённой простой компактной линейной алгеброй Ли\~
\item линейная алгебра $\ggt\subs\slg(V)$ приводима, а~комплексная линейная алгебра $\ggt\subs\glg_{\Cbb}(V_{\R})$ совпадает с~линейной алгеброй
$\ad\br{[\ggt,\ggt]}\oplus\Cbb E\subs\glg\br{[\ggt,\ggt]}$.
\end{nums}
\end{note}

\begin{theorem}\label{main1} Если линейная алгебра $\ggt_{\R}\subs\slg(V_{\R})$ является присоединённой простой компактной линейной алгеброй Ли, то условие~\eqref{elem} не выполняется. Если линейная алгебра $\ggt\subs\slg(V)$ приводима, а~линейная алгебра $\ggt\subs\glg_{\Cbb}(V_{\R})$ совпадает с~линейной алгеброй $\ad\br{[\ggt,\ggt]}\oplus\Cbb E\subs\glg\br{[\ggt,\ggt]}$, то условие~\eqref{elem} выполняется.
\end{theorem}

\begin{theorem}\label{main2} Предположим, что \ter{простая} линейная алгебра Ли $R\br{[\ggt,\ggt]}$ является классической. Условие~\eqref{elem} выполняется тогда и~только тогда, когда линейная алгебра $\ggt\subs\slg(V)$ приводима, а~комплексная линейная алгебра Ли $\ggt\subs\glg_{\Cbb}(V_{\R})$ совпадает с~одной из линейных алгебр $\sog_{2r-1}\oplus\Cbb E$ \ter{$r=2,3$}, $\slg_3$, $\glg_2$ и~$\spg_4$.
\end{theorem}

В~\Ss\ref{facts} введён ряд используемых в~работе обозначений, а~также доказаны некоторые вспомогательные утверждения. В~\Ss\ref{promain} приведено доказательство теорем \ref{main}---\ref{main2}. Наконец, в~\Ss\ref{tables} размещены все необходимые таблицы.

Автор благодарит профессора Э.\,Б.\?Винберга за многолетнее научное руководство, постоянную поддержку и~ценные советы.

\section{Обозначения и вспомогательные факты}\label{facts}

В~этом параграфе все пространства (в~частности, алгебры Ли и~их представления) рассматриваются, если не оговорено противное, над полем~$\Cbb$.

Если $\ggt_i\subs\slg(V_i)$ ($i=1\sco m$)\т произвольные линейные алгебры Ли, то представление~$R$ алгебры Ли $\ggt:=\ggt_1\sop\ggt_m$ в~пространстве $V:=V_1\sot V_m$, служащее тензорным произведением тавтологических представлений $\ggt_i\cln V_i$ ($i=1\sco m$), является точным, и~мы будем называть линейную алгебру Ли $R(\ggt)\subs\slg(V)$, изоморфную алгебре Ли~$\ggt$ как абстрактная алгебра, \textit{тензорным произведением линейных алгебр Ли $\ggt_i\subs\slg(V_i)$, $i=1\sco m$}.

Хорошо известно, что любая неприводимая полупростая линейная алгебра Ли единственным образом представляется в~виде тензорного произведения
неприводимых простых линейных алгебр Ли, называемых её \textit{простыми компонентами}.

Упорядоченную пару $(\Pi;\al)$, где $\Pi$\т неразложимая система простых корней, а~$\al$\т её корень с~номером~$i$, будем называть \textit{системой с~отмеченным корнем} и~обозначать через $\Pi(\al)$ или через $\Pi(i)$. Назовём \textit{изоморфизмом} двух систем с~отмеченным корнем изоморфизм соответствующих неразложимых систем простых корней, переводящий отмеченные корни друг в~друга. Будем говорить, что две системы с~отмеченным корнем
\textit{изоморфны}, если между ними существует изоморфизм, и~рассматривать все системы с~отмеченным корнем с~точностью до изоморфизма.

Обозначим через~$\Om$ множество следующих систем с~отмеченным корнем\:
\begin{nums}{-1}
\item $\Pi(1)$, $\Pi\ne E_8$\~
\item $A_r(2)$, $r>2$\~
\item $A_5(3)$\~
\item $B_r(r)$, $r=3,4$\~
\item $C_r(2)$, $r>2$\~
\item $D_r(r)$, $r=5,6$.
\end{nums}
Для неразложимой системы простых корней~$\Pi$ положим $\pd\Pi:=\bc{\al\in\Pi\cln\Pi(\al)\in\Om}\subs\Pi$, а~также
$\Inn{\Pi}:=\Pi\sm(\pd\Pi)=\bc{\al\in\Pi\cln\Pi(\al)\notin\Om}\subs\Pi$. Все неразложимые системы простых корней~$\Pi$, для которых $\pd\Pi\ne\Pi$ (или, что равносильно, $\Inn{\Pi}\ne\es$), перечислены в~таблице~\ref{tablId} с~указанием подмножеств $\pd\Pi\subs\Pi$ и~$\Inn{\Pi}\subs\Pi$.

Фундаментальные неприводимые представления произвольной простой алгебры Ли находятся в~естественной биекции с~корнями её (неразложимой) системы простых корней. Это позволяет установить биекцию между всеми (с~точностью до изоморфизма) неприводимыми простыми линейными алгебрами Ли, тавтологическое представление которых является фундаментальным, и~всеми системами с~отмеченным корнем. В~дальнейшем мы будем отождествлять каждую из указанных линейных алгебр Ли с~соответствующей ей системой с~отмеченным корнем. Таким образом, множество~$\Om$ можно понимать как определённый класс неприводимых простых линейных алгебр Ли. Обозначим через~$\wt{\Om}$ класс всех простых линейных алгебр Ли множества~$\Om$, имеющих ранг более~$1$.

Пусть $\ggt\subs\glg(V)$\т неприводимая редуктивная линейная алгебра Ли. Тогда алгебра $\ggt\subs\glg(V)$ разлагается в~прямую сумму своего центра и~коммутанта, содержащихся в~$\Cbb E$ и~$\slg(V)$ соответственно. Обозначим через~$R$ тавтологическое представление $\ggt\cln V$, а~через~$\wt{R}$\т представление алгебры~$\ggt$, равное $R$ (соотв. $R+R'$), если представление~$R$ ортогонально (соотв. неортогонально).

Пусть $\ggt_i\subs\slg(V_i)$ ($i=1\sco m$)\т простые компоненты неприводимой полупростой линейной алгебры Ли $[\ggt,\ggt]\subs\slg(V)$. Положим $r:=\rk\ggt$ и~$r_i:=\rk\ggt_i$ ($i=1\sco m$).

\begin{lemma}\label{ome} Допустим, что $m>0$, $r_1\ge r-1$, а~с.п.о.п. представления~$\wt{R}$ алгебры~$\ggt$ нетривиальна. Тогда либо $\ggt\subs\glg(V)$ есть присоединённая простая линейная алгебра Ли, либо линейная алгебра Ли $\ggt_1\subs\slg(V_1)$ относится к~классу~$\Om$.
\end{lemma}

\begin{proof} Рассмотрим произвольное число $i=1\sco m$. Обозначим через~$R_i$ тавтологическое представление $\ggt_i\cln V_i$. Положим
$n_i:=\dim V_i$. Далее, обозначим через $l_i$ (соотв.~$L_i$) индекс простой линейной алгебры Ли~$\ggt_i$ (соотв. простой линейной алгебры Ли~$\ggt_i$ как подалгебры линейной алгебры Ли $[\ggt,\ggt]\subs\slg(V)$). Всякий раз, говоря об индексе простой компоненты~$\ggt_i$ неприводимой полупростой линейной алгебры Ли $[\ggt,\ggt]\subs\slg(V)$, мы будем подразумевать число~$L_i$.

Очевидно, что для любого $i=1\sco m$ число~$L_i$ равно произведению числа~$l_i$ и~всех чисел $n_j$, $j\in\{1\sco m\}\sm\{i\}$.

Возможны следующие случаи.

\begin{cass}{sim} Алгебра~$\ggt$ проста\т что равносильно, $\ggt_1=\ggt$.
\end{cass}

\begin{cass}{nlt} Центр алгебры~$\ggt$ нетривиален, а~линейная алгебра~$\ggt_1$ не является локально транзитивной.
\end{cass}

\begin{cass}{lt} Линейная алгебра~$\ggt_1$ локально транзитивна.
\end{cass}

\begin{cass}{du} Линейная алгебра~$\ggt$ является полупростой, но не является простой и~обладает по крайней мере одной классической простой компонентой индекса менее~$1$.
\end{cass}

\begin{cass}{ndu} Линейная алгебра~$\ggt$ является полупростой, но не является простой, а~все её классические простые компоненты имеют индекс не менее~$1$.
\end{cass}

Предположим, что имеет место случай~\ref{nlt}. Тогда $m=1$, $r_1=r-1$, $\ggt_1=[\ggt,\ggt]$, $V_1=V$ и~$\ggt=\Cbb E\oplus\ggt_1\subs\glg(V)$. Значит, представление $\ggt\cln V$ неортогонально. Поскольку линейная алгебра~$\ggt_1$ не является локально транзитивной, с.п.о.п. указанного представления содержится в~подалгебре $\ggt_1\subs\ggt$\~ то же можно сказать и~о~с.п.о.п. представления~$\wt{R}$.

Таким образом, в~каждом из случаев \ref{sim} и~\ref{nlt} представление $\wt{R}|_{\ggt_1}$ алгебры~$\ggt_1$ обладает нетривиальной с.п.о.п. и~совпадает с~представлением $R_1$ либо $R_1+R'_1$, причём первый вариант возможен только в~случае~\ref{sim} при условии ортогональности представления
$\ggt\cln V$.

Все простые линейные алгебры Ли с~нетривиальной с.п.о.п. суть в~точности все присоединённые простые линейные алгебры Ли, а~также все линейные алгебры Ли, перечисленные в~\cite[табл.\,1,\,2]{Elsh} и~\cite[табл.\,0]{AMP}. Выбрав среди тавтологических представлений указанных линейных алгебр Ли ортогональные неприводимые представления и~прямые суммы двух сопряжённых друг другу неприводимых представлений (учитывая при этом неприводимость присоединённого представления произвольной простой алгебры Ли), получаем требуемое утверждение в~каждом из случаев \ref{sim} и~\ref{nlt}.

Теперь разберём случай~\ref{lt}. Согласно результатам работы~\cite{Shp}, любая локально транзитивная неприводимая простая линейная алгебра Ли
может быть получена из линейной алгебры $A_r(1)$, $A_{2r}(2)$, $C_r(1)$ ($r\in\N$) либо $D_5(5)$ при помощи принципа двойственности (см.~\cite{Shp}). Применение последнего к~неприводимой полупростой линейной алгебре Ли не влияет на её простые компоненты, отличные от линейных алгебр $A_r(1)$, и~таким образом лемма в~случае~\ref{lt} доказана.

Далее будем предполагать, что имеет место один из случаев \ref{du} и~\ref{ndu}, а~линейная алгебра~$\ggt_1$ не относится к~классу~$\Om$.

Справедливы следующие утверждения\:
\begin{nums}{-1}
\item $m=2$, $r_1=r-1$, $r_2=1$ и~$\ggt\subs\slg(V)$\~
\item если линейная алгебра Ли~$\ggt_1$ является классической, то $\ggt_1=\sog_3$\~
\item если $n_1<4$, то $\ggt_1=\sog_3$\~
\item если линейная алгебра Ли~$\ggt_2$ является классической, то $\ggt_2=\slg_2$ либо $\ggt_2=\sog_3$\~
\item ни одна из простых компонент неприводимой полупростой линейной алгебры Ли~$\ggt$ не относится к~классу~$\wt{\Om}$.
\end{nums}

Если $\ggt_i=\sog_3$ для некоторого $i=1,2$, то $L_i\ge l_i=1$, а~если $\ggt_2=\slg_2$, то $l_2=\frac{1}{4}$ и~$L_2=\frac{n_1}{4}$. Значит, в~случае~\ref{du} неприводимая полупростая линейная алгебра~$\ggt$ имеет две простых компоненты $\ggt_1=\sog_3$ и~$\ggt_2=\slg_2$. Для такой линейной алгебры~$\ggt$ представление~$R$ симплектично, его с.п.о.п. одномерна, представление~$R'$ точно, и, как следствие, с.п.о.п. представления $\wt{R}=R+R'$ тривиальна, что приводит нас в~случае~\ref{du} к~противоречию.

Все неприводимые полупростые линейные алгебры Ли, не являющиеся простыми, не имеющие классических простых компонент индекса менее~$1$ и~обладающие нетривиальной с.п.о.п., перечислены в~\cite[табл.\,5,\,6]{Elash}. В~этих таблицах каждая линейная алгебра Ли представлена в~виде тензорного произведения неприводимых полупростых линейных алгебр Ли $\ggt'_i\subs\slg(V'_i)$, $i=1\sco m'$ (в~указанном порядке). Для всех линейных алгебр таблицы~6, а~также для линейных алгебр №№\,4,\,7,\,8 таблицы~5 линейная алгебра~$\ggt'_1$ относится к~классу~$\wt{\Om}$. Кроме того, линейные алгебры №№\,2,\,3,\,6 таблицы~5 имеют по три простых компоненты. Значит, в~случае~\ref{ndu} линейная алгебра~$\ggt$ совпадает с~одной из линейных алгебр №№\,1,\,5 таблицы~5, т.\,е. $m'=2$, $\ggt'_1=\sog_{n_1}$ и~$n_1=n_2+2$. При этом если $n_1=3$, то $n_2=1$, $\ggt'_1=\sog_3$, $\ggt'_2=0$, $m=1$\~ если $n_1=4$, то $n_2=2$, $\ggt'_1=\sog_4\cong\slg_2\oplus\slg_2$ и~$\ggt'_2=\slg_2$, откуда $m=3$\~ если же $n_1>4$, то линейная алгебра $\ggt'_1=\sog_{n_1}$ относится к~классу~$\wt{\Om}$. Тем самым мы и~в~случае~\ref{ndu} пришли к~противоречию.

Теперь лемма полностью доказана.
\end{proof}

\begin{lemma}\label{sl2} Допустим, что $r=1$, а~с.п.о.п. представления~$\wt{R}$ алгебры~$\ggt$ нетривиальна. Тогда $\ggt\subs\glg(V)$ есть присоединённая простая линейная алгебра Ли $\ad(\slg_2)$.
\end{lemma}

\begin{proof} Имеем $\rk\ggt=r=1<\dim\ggt$, поскольку с.п.о.п. точного представления~$\wt{R}$ алгебры~$\ggt$ нетривиальна. Значит, $m=1$, $r_1=1=r>r-1$, $\ggt\cong\slg_2$, $V_1=V$, $\ggt_1=\ggt\subs\slg(V)$. Предположим, что линейная алгебра $\ggt\subs\slg(V)$ не является присоединённой. Согласно лемме~\ref{ome}, линейная алгебра $\ggt\subs\slg(V)$ относится к~классу~$\Om$ и, ввиду равенства $r=1$, совпадает с~линейной алгеброй $A_1(1)=\slg_2$. Тавтологическое представление~$R$ алгебры $\ggt=\slg_2$ симплектично, а~с.п.о.п. её представления $\wt{R}=R+R'=R+R$ тривиальна, что противоречит условию.
\end{proof}

\begin{stm} Пусть $\Pi$ и~$\Pi'$\т неразложимые системы простых корней, такие что $\Pi'\subs\Pi$ и~$|\Pi'|=|\Pi|-1$, а~$\al\in\Pi'$\т некоторый корень. Если $\Pi(\al)\in\Om$, то $\Pi'(\al)\in\Om$.
\end{stm}

\begin{proof} Если $|\Pi'|=1$, то $\Pi'(\al)=A_1(1)\in\Om$. Далее будем считать, что $|\Pi|>2$.

Согласно условию, $\Pi'=\Pi\sm\{\be\}$, где $\be\in\Pi\sm\{\al\}$\т корень неразложимой системы простых корней~$\Pi$, соответствующий висячей вершине её (неразложимой) схемы Дынкина. Всевозможные системы с~отмеченным корнем $\Pi'(\al)$, получаемые таким образом при $\Pi(\al)\in\Om$ и~$|\Pi|>2$,
перечислены в~таблице~\ref{tablOm} (где всюду предполагается, что $r>2$). Осталось воспользоваться таблицей~\ref{tablId}.
\end{proof}

\begin{imp}\label{ssh} Пусть $\Pi$ и~$\Pi'$\т неразложимые системы простых корней, такие что $\Pi'\subs\Pi$ и~$|\Pi'|\ge|\Pi|-1$. Если для корня $\al\in\Pi'$ имеем $\Pi(\al)\in\Om$, то $\Pi'(\al)\in\Om$.
\end{imp}

Следующие два утверждения ввиду их известности приводятся без доказательства.

\begin{stm}\label{fai} Неприводимое представление произвольной комплексной редуктивной алгебры Ли~$\ggt$ является точным тогда и~только тогда, когда его веса относительно картановской подалгебры $\tgt\subs\ggt$ линейно порождают пространство~$\tgt^*$.
\end{stm}

\begin{stm}\label{lind} Пусть $\Eb$\т евклидово пространство, $\Pi\subs\Eb$\т неразложимая система простых корней, а~$C\subs\Eb$\т её камера Вейля. Тогда для любого подмножества $\Pi'\subs\Pi$, отличного от~$\Pi$, имеем $\ha{\Pi'}_{\R}\cap C=\{0\}$.
\end{stm}

\section{Доказательства результатов}\label{promain}

Данный параграф посвящён доказательству теорем \ref{main}---\ref{main2}.

Вернёмся к~обозначениям и~предположениям из \Ss\ref{introd}.

Начнём с~доказательства теоремы~\ref{main}.

Фиксируем максимальную коммутативную подалгебру~$\tgt_{\R}$ алгебры~$\ggt_{\R}$ и~картановскую подалгебру $\tgt:=\tgt_{\R}(\Cbb)$ алгебры~$\ggt$.
В~результате возникают система корней $\De\subs\tgt^*$, группа Вейля $W\subs\GL(\tgt^*)$, система положительных корней $\De^+\subs\De$ и~система простых корней $\Pi\subs\De^+\subs\De$. Поскольку алгебра $[\ggt,\ggt]$ проста, система простых корней $\Pi\subs\tgt^*$ неразложима. Обозначим через~$\Pc$ семейство всех неразложимых систем простых корней $\Pi'\subs\Pi\subs\tgt^*$ порядка $r-2$. В~системе простых корней~$\Pi$ подмножество всех корней, не принадлежащих ни одному из подмножеств $\Inn{\Pi}{}'$ ($\Pi'\in\Pc$), обозначим через $\pd_r\Pi$.

Пусть $\La\subs\tgt^*$\т система весов представления~$R$, а~$\la\in\La$\т его старший вес относительно системы простых корней $\Pi\subs\De$. Положим $\Pi_{\la}:=\bc{\al\in\Pi\cln\ha{\la|\al}\ne0}\subs\Pi$.

Имеем $\ha{W\la}=\ha{\La}=\tgt^*$, $\ba{\{\la\}\cup\Pi}=\tgt^*$, $r=\dim\tgt^*\in\bc{|\Pi|;|\Pi|+1}$, $|\Pi|\in\{r;r-1\}$.
В~частности, $r-2<|\Pi|$.

\begin{stm}\label{cov} Предположим, что $r>2$. Тогда система простых корней~$\Pi$ совпадает с~объединением всех своих подмножеств $\Pi'\in\Pc$.
\end{stm}

\begin{proof} Имеем $0<r-2<|\Pi|$. Значит, в~(неразложимой) схеме Дынкина системы простых корней~$\Pi$ для любой вершины найдётся содержащая её неразложимая подсхема порядка $r-2$.
\end{proof}

Пусть $\Pi'\in\Pc$\т некоторая система простых корней, удовлетворяющая условию
\eqn{\label{nes}
(r>2)\quad\Ra\quad(\Pi_{\la}\cap\Pi'\ne\es),}
а~$W'\subs\GL(\tgt^*)$\т её группа Вейля.

Имеем $|\Pi'|=r-2<|\Pi|$. Согласно утверждению~\ref{lind}, система $\br{\{\la\}\cup\Pi'}\subs\tgt^*$ состоит из $r-1$ линейно независимых линейных функций, принимающих на вещественной форме~$\tgt_{\R}$ пространства~$\tgt$ чисто мнимые значения. Следовательно, пересечение ядер этих линейных функций имеет вид $\Cbb\xi\subs\tgt$, $\xi\in\tgt_{\R}\sm\{0\}$. В~пространстве~$\tgt^*$ подпространство всех линейных функций, обращающихся в~нуль на векторе $\xi\in\tgt_{\R}$, можно отождествить с~пространством~$\tgt_0^*$, где $\tgt_0:=\tgt/(\Cbb\xi)$. Ясно, что $\dim\tgt_0=r-1$ и~что $\tgt_0^*=\ba{\{\la\}\cup\Pi'}\subs\tgt^*$.

Докажем, что
\eqn{\label{span}
\Pi'\subs\ha{W'\la}.}

Для любого корня $\al\in\Pi_{\la}\cap\Pi'$ имеем $\al=\frac{1}{\ha{\la|\al}}(\la-r_{\al}\la)\in\ha{W'\la}$, $\al\in\ha{W'\la}\cap\ha{\Pi'}$. Если $r>2$, то $\Pi_{\la}\cap\Pi'\ne\es$, $\ha{W'\la}\cap\ha{\Pi'}\ne0$, что влечёт~\eqref{span}, поскольку $W'$\д подмодуль $\ha{\Pi'}\subs\tgt^*$ прост. Если же $r=2$, то $|\Pi'|=r-2=0$, $\Pi'=\es\subs\ha{W'\la}$.

Согласно~\eqref{span}, $\tgt_0^*=\ba{\{\la\}\cup\Pi'}=\ha{W'\la}\subs\tgt^*$.

Обозначим через~$\De_0$ систему корней $(\De\cap\tgt_0^*)\subs\tgt_0^*$, а~через~$\De^+_0$\т систему положительных корней $(\De^+\cap\tgt_0^*)\subs\De_0$. Последней соответствует система простых корней $\Pi_0\subs\De_0$. Имеем $\Pi'\subs\Pi\subs\De^+$ и~$\Pi'\subs\tgt_0^*$, откуда $\Pi'\subs\De^+_0\subs\De_0$. Кроме того, $\Pi\cap(\De^++\De^+)=\es$, $\Pi'\cap(\De^+_0+\De^+_0)=\es$, и~поэтому
$\Pi'\subs\Pi_0$.

В~алгебре~$\ggt$ централизатор~$\ggt'$ элемента $\xi\in\tgt_{\R}$ согласован с~её компактной вещественной формой~$\ggt_{\R}$ и~содержит картановскую подалгебру $\tgt\subs\ggt$. Значит, $\ggt'$\т комплексная редуктивная алгебра с~компактной вещественной формой $\ggt'_{\R}:=\ggt'\cap\ggt_{\R}$, картановской подалгеброй $\tgt\subs\ggt$ и~системой корней $\De_0\subs\tgt^*$, а~$\Cbb\xi$\т центральный идеал алгебры~$\ggt'$, согласованный с~её компактной вещественной формой~$\ggt'_{\R}$. В~свою очередь $\ggt_0:=\ggt'/(\Cbb\xi)$\т комплексная редуктивная алгебра Ли с~компактной вещественной формой $\ggt'_{\R}/(\R\xi)$, картановской подалгеброй~$\tgt_0$ и~системой корней $\De_0\subs\tgt_0^*$, причём $\rk\ggt_0=\dim\tgt_0=r-1$.

Пусть $V_0\subs V$\т ядро оператора $\xi\in\tgt_{\R}\subs\ggt\subs\slg(V)$. Ограничение тавтологического представления $\ggt'\cln V$ на (очевидно, инвариантное) подпространство $V_0\subs V$ естественным образом индуцирует представление $\ggt_0\cln V_0$, поскольку $(\Cbb\xi)V_0=0$. При этом
\eqn{\label{refo}
\begin{split}
V_0=(V_0\cap V_{\R})\oplus i(V_0\cap V_{\R})=(V_0\cap V_{\R})(\Cbb);\\
\br{\ggt'_{\R}/(\R\xi)}(V_0\cap V_{\R})=\ggt'_{\R}(V_0\cap V_{\R})\subs(V_0\cap V_{\R}).
\end{split}}

Алгебра~$\ggt_0$ обладает неприводимым представлением~$R_0$ со старшим весом $\la\in\tgt_0^*$ относительно системы положительных корней $\De^+_0\subs\De_0$. Если $\pi\cln\ggt'\thra\ggt_0$\т канонический эпиморфизм алгебр, то $R_0\circ\pi$\т неприводимое представление алгебры~$\ggt'$ со старшим весом $\la\in\tgt^*$ относительно системы положительных корней $\De^+_0\subs\De_0$. Обозначим через~$\wt{R}_0$ представление алгебры~$\ggt_0$, равное $R_0$ (соотв. $R_0+R'_0$), если представление~$R_0$ ортогонально (соотв. неортогонально).

\begin{prop} Представление~$R_0$ алгебры~$\ggt_0$ точно.
\end{prop}

\begin{proof} Пусть $\La_0\subs\tgt_0^*$\т система весов представления~$R_0$ алгебры~$\ggt_0$. Имеем $\Pi'\subs\De_0$,
$W'\la\subs\La_0$, $\tgt_0^*=\ha{W'\la}\subs\ha{\La_0}\subs\tgt_0^*$, $\ha{\La_0}=\tgt_0^*$. Осталось применить утверждение~\ref{fai}.
\end{proof}

Таким образом, мы можем отождествить алгебру~$\ggt_0$ с~неприводимой редуктивной линейной алгеброй $R_0(\ggt_0)$ и~говорить о~простых компонентах её коммутанта\т простых линейных алгебрах Ли $\ggt_i\subs\slg(V_i)$ ($i=1\sco m$)\т и~о~соответствующих им (в~указанном порядке) неразложимых компонентах $\Pi_1\sco\Pi_m$ системы простых корней $\Pi_0\subs\De_0$.

Если $\ggt_0$ есть присоединённая линейная алгебра, то $\la\in\tgt_0^*$\т старший корень системы корней $\De_0\subs\tgt_0^*$, откуда $\la\in\De_0\subs\De\subs\tgt^*$.

\begin{lemma}\label{r2} Допустим, что $r=2$, а~с.п.о.п. представления~$\wt{R}_0$ алгебры~$\ggt_0$ нетривиальна. Тогда $\la\in\De$.
\end{lemma}

\begin{proof} Достаточно применить лемму~\ref{sl2} к~линейной алгебре~$\ggt_0$ ранга $r-1=1$.
\end{proof}

\begin{lemma}\label{lar} Допустим, что $r>2$, $\la\notin\De$, а~с.п.о.п. представления~$\wt{R}_0$ алгебры~$\ggt_0$ нетривиальна. Тогда подмножество
$(\Pi_{\la}\cap\Pi')\subs\Pi$ включает в~себя единственный корень $\al\in\Pi$. При этом
\begin{nums}{-1}
\item $\ha{\la|\al}=1$ и~$\Pi'(\al)\in\Om$\~
\item если система простых корней~$\Pi_0$ неразложима, то $\Pi_0(\al)\in\Om$, а~также $\ha{\la|\be}=0$ для всякого $\be\in\Pi_0\sm\{\al\}$.
\end{nums}
\end{lemma}

\begin{proof} Неразложимая система простых корней $\Pi'\subs\tgt_0^*$ порядка $r-2>0$ содержится в~системе простых корней $\Pi_0\subs\tgt_0^*$, а~значит, и~в~некоторой её неразложимой компоненте $\Pi_i$, $i=1\sco m$ (в~частности, $m>0$). Не умаляя общности, будем считать, что $\Pi'\subs\Pi_1$. Имеем
\begin{gather}
|\Pi_1|\le|\Pi_0|\le\dim\tgt_0^*=r-1=|\Pi'|+1;\label{tpi}\\
\rk\ggt_1=|\Pi_1|\ge|\Pi'|=r-2=\rk\ggt_0-1.\label{gpi}
\end{gather}
Поскольку $r>2$, подмножество $(\Pi_{\la}\cap\Pi')\subs\Pi$ не является пустым и~включает в~себя некоторый корень $\al\in\Pi$. Применяя лемму~\ref{ome} к~линейной алгебре~$\ggt_0$ и~учитывая~\eqref{gpi}, получаем, что линейная алгебра $\ggt_1\subs\slg(V_1)$ относится к~классу~$\Om$. Это означает, что
\begin{nums}{-1}
\item $\ha{\la|\al}=1$\~
\item $\ha{\la|\be}=0$ для всякого $\be\in\Pi_1\sm\{\al\}$ (в~частности, $\ha{\la|\be}=0$ для всякого $\be\in\Pi'\sm\{\al\}$)\~
\item $\Pi_1(\al)\in\Om$ (и, согласно соотношению~\eqref{tpi} и~следствию~\ref{ssh}, $\Pi'(\al)\in\Om$).\qedhere
\end{nums}
\end{proof}

\begin{lemma}\label{tr} Предположим, что с.п.о.п. представления~$\wt{R}_0$ алгебры~$\ggt_0$ тривиальна. Тогда найдётся вектор $v\in V_{\R}$, такой что $\xi\in(\ggt_{\R})_v$ и~$\rk\br{(\ggt_{\R})_v}=1$.
\end{lemma}

\begin{proof} Напомним, что $\la\in\tgt_0^*\subs\tgt^*$\т старший вес представления~$R$ алгебры~$\ggt$ относительно системы простых корней $\Pi\subs\De$. В~силу~\eqref{refo}, представление~$\wt{R}_0$ алгебры~$\ggt_0$ вкладывается в~представление $\ggt_0\cln V_0$. Значит, с.п.о.п. последнего тривиальна. Вновь пользуясь соотношениями~\eqref{refo}, получаем, что с.п.о.п. представления $\br{\ggt'_{\R}/(\R\xi)}\cln(V_0\cap V_{\R})$
тривиальна. Следовательно, существует вектор $v\in V_0\cap V_{\R}$, для которого $\ggt_v\cap\ggt'_{\R}=\R\xi$. Итак,
$\br{(\ggt_{\R})_v}\cap\ggt'=\ggt_v\cap\ggt'_{\R}=\R\xi$, т.\,е. подалгебра $(\ggt_{\R})_v\subs\ggt_{\R}$ содержит $\R\xi$ в~качестве максимальной коммутативной подалгебры и~потому имеет ранг~$1$.
\end{proof}

\begin{note} Схема доказательства леммы~\ref{tr} унаследована из~\cite{CPV}, где используется аналогичный метод нахождения точек, имеющих замкнутую орбиту и~стабилизатор ранга~$1$, для комплексных редуктивных линейных групп Ли.
\end{note}

Из лемм \ref{r2}---\ref{tr} вытекает следующая лемма.

\begin{lemma}\label{PPi} Допустим, что $\la\notin\De$, а~в~пространстве~$V_{\R}$ не существует вектора~$v$, такого что $\xi\in(\ggt_{\R})_v$ и~$\rk\br{(\ggt_{\R})_v}=1$. Тогда $r>2$ и~$\Pi_{\la}\cap\Pi'=\{\al\}\subs\Pi$ \ter{$\al\in\Pi$}. При этом
\begin{nums}{-1}
\item $\ha{\la|\al}=1$ и~$\al\in\pd\Pi'$\~
\item если система простых корней~$\Pi_0$ неразложима, то $\al\in\pd\Pi_0$, а~также $\ha{\la|\be}=0$ для всякого $\be\in\Pi_0\sm\{\al\}$.
\end{nums}
\end{lemma}

До сих пор предполагалось, что \textit{фиксирована} система простых корней $\Pi'\in\Pc$, удовлетворяющая~\eqref{nes}. В~дальнейшем же мы будем всякий раз выбирать её в~зависимости от конкретной ситуации. В~таком случае из леммы~\ref{PPi} вытекает следующая лемма.

\begin{lemma}\label{PP} Предположим, что $\la\notin\De$, а~условие~\eqref{elem} не выполняется. Пусть $\Pi'\in\Pc$\т система простых корней, удовлетворяющая~\eqref{nes}. В~пространстве~$\tgt^*$ обозначим через~$\Pi_0$ систему простых корней, соответствующую системе положительных корней $\De^+\cap\ba{\{\la\}\cup\Pi'}$. Тогда $r>2$ и~$\Pi_{\la}\cap\Pi'=\{\al\}\subs\Pi$ \ter{$\al\in\Pi$}. При этом
\begin{nums}{-1}
\item $\ha{\la|\al}=1$ и~$\al\in\pd\Pi'$\~
\item если система простых корней~$\Pi_0$ неразложима, то $\al\in\pd\Pi_0$, а~также $\ha{\la|\be}=0$ для всякого $\be\in\Pi_0\sm\{\al\}$.
\end{nums}
\end{lemma}

\begin{lemma}\label{faP} Допустим, что $\la\notin\De$, а~условие~\eqref{elem} не выполняется. Тогда $r>2$, а~для любой системы простых корней $\Pi'\in\Pc$ имеем
\eqn{\label{cPi}\begin{aligned}
&|\Pi_{\la}\cap\Pi'|\le1;\quad\quad&\Pi_{\la}\cap\Inn{\Pi}{}'=\es;\\
&\fa\al\in\Pi_{\la}\cap\Pi'\quad\quad&\ha{\la|\al}=1.
\end{aligned}}
\end{lemma}

\begin{proof} Предположим, что $r=2$. Тогда система простых корней $\Pi':=\es\subs\Pi$ принадлежит семейству~$\Pc$ и~удовлетворяет~\eqref{nes}. Применяя к~ней лемму~\ref{PP}, получаем, что $r>2$, и~тем самым приходим к~противоречию.

Следовательно, $r>2$. Пусть $\Pi'\in\Pc$\т произвольная система простых корней. Если она удовлетворяет~\eqref{nes}, то соотношения~\eqref{cPi} имеют место в~силу леммы~\ref{PP}. Если же условие~\eqref{nes} не выполнено, то $\Pi_{\la}\cap\Pi'=\es$, что немедленно влечёт~\eqref{cPi}.
\end{proof}

\begin{lemma}\label{undi} Допустим, что $\la\notin\De$, а~условие~\eqref{elem} не выполняется. Тогда $r>2$, $\Pi_{\la}\subs\pd_r\Pi$,
\begin{align}
&\quad&&\quad&&\quad&&\fa\Pi'\in\Pc&|\Pi_{\la}\cap\Pi'|\le1;&\quad&&\quad&\label{dist}\\
&\quad&&\quad&&\quad&&\fa\al\in\Pi_{\la}&\ha{\la|\al}=1.&\quad&&\quad&\label{uni}
\end{align}
\end{lemma}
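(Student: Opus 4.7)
The plan is to derive Lemma~\ref{undi} as a direct corollary of Lemma~\ref{faP} together with Statement~\ref{cov}. Under the hypotheses of Lemma~\ref{undi} (namely $\la\notin\De$ and that condition~\eqref{elem} fails), Lemma~\ref{faP} already delivers both $r>2$ and, for every $\Pi'\in\Pc$, the three conclusions packaged into~\eqref{cPi}. The first clause of~\eqref{cPi} is literally~\eqref{dist}, so that part of the statement requires nothing beyond citing Lemma~\ref{faP}.

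For the inclusion $\Pi_{\la}\subs\pd_r\Pi$ I would simply unfold the definition of $\pd_r\Pi$ introduced in \Ss\ref{promain}: by construction it is exactly the set of simple roots of~$\Pi$ that are not covered by any subset $\Inn{\Pi}{}'$ with $\Pi'\in\Pc$. Since the second clause of~\eqref{cPi} asserts $\Pi_{\la}\cap\Inn{\Pi}{}'=\es$ for every such $\Pi'$, taking the union over $\Pi'\in\Pc$ gives $\Pi_{\la}\cap\hr{\bigcup_{\Pi'\in\Pc}\Inn{\Pi}{}'}=\es$, which is equivalent to $\Pi_{\la}\subs\pd_r\Pi$.

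To establish~\eqref{uni} I would pick an arbitrary $\al\in\Pi_{\la}$ and then locate a subsystem $\Pi'\in\Pc$ that contains~$\al$. Since $r>2$, Statement~\ref{cov} guarantees that $\Pi$ is covered by $\bigcup_{\Pi'\in\Pc}\Pi'$, so such a $\Pi'$ exists. Then $\al\in\Pi_{\la}\cap\Pi'$, and the third clause of~\eqref{cPi} applied to this $\Pi'$ gives $\ha{\la|\al}=1$, which is~\eqref{uni}.

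Since every piece reduces to a direct appeal to results already proved, I do not anticipate any real obstacle: the argument is essentially bookkeeping, matching each conclusion of Lemma~\ref{undi} against the corresponding item supplied by Lemma~\ref{faP}, with Statement~\ref{cov} used exactly once to promote the identity $\ha{\la|\al}=1$ from roots lying in some $\Pi_{\la}\cap\Pi'$ to all of~$\Pi_{\la}$.
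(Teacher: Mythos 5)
Your proposal is correct and follows exactly the paper's own argument: the paper likewise deduces $r>2$, the inclusion $\Pi_{\la}\subs\pd_r\Pi$ and~\eqref{dist} directly from Lemma~\ref{faP}, and then obtains~\eqref{uni} by combining Statement~\ref{cov} with a second appeal to Lemma~\ref{faP}. You merely spell out the bookkeeping (unfolding the definition of $\pd_r\Pi$ and locating a $\Pi'\in\Pc$ containing a given $\al$) that the paper leaves implicit.
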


\begin{proof} Из леммы~\ref{faP} следует, что $r>2$ и~$\Pi_{\la}\subs\pd_r\Pi$, а~также справедливо соотношение~\eqref{dist}. Теперь, применяя утверждение~\ref{cov} и~(повторно) лемму~\ref{faP}, получаем~\eqref{uni}.
\end{proof}

Приведём более удобную и~фундаментальную трактовку леммы~\ref{undi}.

\begin{theorem}\label{submain} Предположим, что $\la\notin\De$, а~условие~\eqref{elem} не выполняется. Тогда
\begin{nums}{-1}
\item $r>2$\~
\item $\ha{\la|\al}=0$ для всякого $\al\in\Pi\sm(\pd_r\Pi)$\~
\item $\ha{\la|\al}\in\{0;1\}$ для всякого $\al\in\pd_r\Pi$\~
\item на схеме Дынкина системы простых корней~$\Pi$ любым различным корням $\al,\be\in\Pi$, таким что $\ha{\la|\al}=\ha{\la|\be}=1$, соответствуют
вершины, путь между которыми содержит не менее $r-2$ рёбер.
\end{nums}
\end{theorem}

На основе таблицы~\ref{tablId} все случаи, когда $r>2$, $\Pi$\т неразложимая система простых корней порядка $r$ (соотв. $r-1$), $\Pi'\in\Pc$ и~$\Inn{\Pi}{}'\ne\es$, описаны с~указанием подмножества $\Inn{\Pi}{}'\subs\Pi$ и~помещены в~таблицу~\ref{tablIr} (соотв. в~таблицу~\ref{tablI1r}). Далее, все случаи, когда $r>2$, $\Pi$\т неразложимая система простых корней порядка $r$ (соотв. $r-1$) и~$\pd_r\Pi\ne\Pi$, приведены с~указанием подмножества $\pd_r\Pi\subs\Pi$ в~таблице~\ref{tabldr} (соотв. в~таблице~\ref{tabld1r}), составленной на основе таблицы~\ref{tablIr} (соотв. таблицы~\ref{tablI1r}).

\begin{prop}\label{adj} Допустим, что $\la\in\De$. Тогда $\ggt=[\ggt,\ggt]$, а~\ter{простая} линейная алгебра $R(\ggt)$ является присоединённой либо совпадает с~одной из простых линейных алгебр $\ph_1(B_r)$, $\ph_2(C_r)$, $\ph_1(F_4)$ и~$\ph_1(G_2)$.
\end{prop}

\begin{proof} Имеем $\la\in\De\subs\ha{\Pi}\subs\tgt^*$, $\ha{\Pi}=\ba{\{\la\}\cup\Pi}=\tgt^*$, $|\Pi|=\dim\tgt^*=r$, $\ggt=[\ggt,\ggt]$. Кроме того, в~системе корней~$\De$ с~системой простых корней~$\Pi$ корень~$\la$ является доминантным, а~значит, совпадает со старшим корнем либо (в~случае наличия корней двух различных длин) с~наибольшим коротким корнем.
\end{proof}

\begin{prop}\label{Bt} Допустим, что $\Pi=B_r$, $\la\notin\De$, $\ha{\la|\al_r}=0$, а~условие~\eqref{elem} не выполняется. Тогда $\ha{\la|\al_1}\seq\ha{\la|\al_{r-2}}=0$.
\end{prop}

\begin{proof} Предположим, что $\ha{\la|\al_i}\ne0$ для некоторого $i\in\{1\sco r-2\}$.

Система простых корней $\Pi':=\{\al_1\sco\al_{r-2}\}\subs\Pi$ принадлежит семейству~$\Pc$ и~удовлетворяет~\eqref{nes}. Легко видеть, что $\ba{\{\la\}\cup\Pi'}=\ha{\al_r}^{\perp}$. В~пространстве~$\tgt^*$ обозначим через~$\Pi_0$ систему простых корней, соответствующую системе положительных корней $\De^+\cap\ba{\{\la\}\cup\Pi'}$. Имеем $\al:=\al_{r-1}+\al_r\in\De^+$, $h_{\al}=2h_{\al_{r-1}}+h_{\al_r}$,
$\ha{\la|\al}=2\ha{\la|\al_{r-1}}+\ha{\la|\al_r}$, а~$\Pi_0$ есть неразложимая система простых корней типа~$B_{r-1}$, включающая в~себя простые корни $\al_1\sco\al_{r-2},\al$ в~стандартном порядке, откуда $\pd\Pi_0\subs\{\al_1,\al\}$. В~силу леммы~\ref{PP}, $i=1$, $\ha{\la|\al_1}=1$, $0=\ha{\la|\al_2}\seq\ha{\la|\al_{r-2}}=\ha{\la|\al}=2\ha{\la|\al_{r-1}}+\ha{\la|\al_r}$, и, как следствие, $\ha{\la|\al_1}=1$, $0=\ha{\la|\al_2}\seq\ha{\la|\al_{r-2}}=\ha{\la|\al_{r-1}}=\ha{\la|\al_r}$, $\la=\ph_1\in\De$. Получили противоречие.
\end{proof}

\begin{prop}\label{Ct} Допустим, что $\Pi=C_r$, $\la\notin\De$, $\ha{\la|\al_r}=0$, а~условие~\eqref{elem} не выполняется. Тогда $\la=\ph_1$ либо $\ha{\la|\al_1}\seq\ha{\la|\al_{r-2}}=0$.
\end{prop}

\begin{proof} Предположим, что $\ha{\la|\al_i}\ne0$ для некоторого $i\in\{1\sco r-2\}$.

Система простых корней $\Pi':=\{\al_1\sco\al_{r-2}\}\subs\Pi$ принадлежит семейству~$\Pc$ и~удовлетворяет~\eqref{nes}. Легко видеть, что $\ba{\{\la\}\cup\Pi'}=\ha{\al_r}^{\perp}$. В~пространстве~$\tgt^*$ обозначим через~$\Pi_0$ систему простых корней, соответствующую системе положительных корней $\De^+\cap\ba{\{\la\}\cup\Pi'}$. Имеем $\al:=2\al_{r-1}+\al_r\in\De^+$, $h_{\al}=h_{\al_{r-1}}+h_{\al_r}$,
$\ha{\la|\al}=\ha{\la|\al_{r-1}}+\ha{\la|\al_r}$, а~$\Pi_0$ есть неразложимая система простых корней типа~$C_{r-1}$, включающая в~себя простые корни $\al_1\sco\al_{r-2},\al$ в~стандартном порядке, откуда $\pd\Pi_0=\{\al_1,\al_2\}$. В~силу леммы~\ref{PP}, $i\in\{1,2\}$, $\ha{\la|\al_j}=\de_{ij}$ для всякого $j=1\sco r-2$, а~также $\ha{\la|\al}=0$. Значит, $\ha{\la|\al_{r-1}}+\ha{\la|\al_r}=\ha{\la|\al}=0$, $\ha{\la|\al_{r-1}}=\ha{\la|\al_r}=0$, что влечёт равенство $\ha{\la|\al_j}=\de_{ij}$ для любого $j=1\sco r$. Тем самым мы получили, что $\la=\ph_i\in\{\ph_1,\ph_2\}$ и, поскольку $\ph_2\in\De$, $\la=\ph_1$.
\end{proof}

\begin{prop}\label{Dt} Допустим, что $r>3$, $\Pi=D_r$, $\la\notin\De\cup\{\ph_1\}$, $\ha{\la|\al_{r-1}}=\ha{\la|\al_r}$, а~условие~\eqref{elem} не выполняется. Тогда $\ha{\la|\al_1}\seq\ha{\la|\al_{r-2}}=0$.
\end{prop}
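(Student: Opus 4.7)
I would follow the pattern of Propositions~\ref{Bt} and~\ref{Ct}. Suppose for contradiction that $\ha{\la|\al_i}\ne0$ for some $i\in\{1\sco r-2\}$, and take $\Pi':=\{\al_1\sco\al_{r-2}\}\subs\Pi$, a simple subsystem of type $A_{r-2}$ lying in $\Pc$; since $\al_i\in\Pi_{\la}\cap\Pi'$, condition~\eqref{nes} holds. In standard orthonormal coordinates on $\tgt^*$ the hypothesis $\ha{\la|\al_{r-1}}=\ha{\la|\al_r}$ is equivalent to the vanishing of the last coordinate of $\la$, so $\ba{\{\la\}\cup\Pi'}=\ha{e_r}^{\perp}$ and $\De^+\cap\ha{e_r}^{\perp}$ is the standard positive root system of $D_{r-1}$. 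Hence $\Pi_0$ is irreducible of type $D_{r-1}$, with simple roots $\al_1\sco\al_{r-2},\al$, where $\al:=\al_{r-2}+\al_{r-1}+\al_r=e_{r-2}+e_{r-1}$.

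Next I would invoke Lemma~\ref{PP} to obtain a unique $k\in\{1\sco r-2\}$ with $\Pi_{\la}\cap\Pi'=\{\al_k\}$, $\al_k\in\pd\Pi'\cap\pd\Pi_0$, $\ha{\la|\al_k}=1$, and $\ha{\la|\be}=0$ for every $\be\in\Pi_0\sm\{\al_k\}$. Evaluating the last relation at $\be=\al$ and using $\ha{\la|\al_{r-1}}=\ha{\la|\al_r}$ gives $\ha{\la|\al_{r-2}}+2\ha{\la|\al_{r-1}}=0$. If $k=r-2$, this forces $\ha{\la|\al_{r-1}}=-\frac{1}{2}$, violating the dominance of $\la$; hence $k\le r-3$, so $\ha{\la|\al_{r-1}}=\ha{\la|\al_r}=0$ and $\la=\ph_k$. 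The hypothesis $\la\ne\ph_1$ excludes $k=1$, and since $\ph_2=e_1+e_2$ is the highest root of $D_r$, the hypothesis $\la\notin\De$ excludes $k=2$; so $k\in\{3\sco r-3\}$, which is already empty for $r\in\{4,5\}$.

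For $r\ge6$ I would rule out the surviving values of $k$ using the constraint $\al_k\in\pd\Pi_0\cap\Pi'$. Reading off $\pd D_{r-1}$ from Table~\ref{tablId} and from the list $\Om$ modulo equivalence of diagrams, one sees that the ``new'' simple root $\al$ lies outside $\Pi'$, the first simple root $\al_1$ of $\Pi_0$ is always in $\pd\Pi_0$, and the equivalent spinor-twin of $\al$ (which in our indexing is $\al_{r-2}$) belongs to $\pd\Pi_0$ only when $r-1\in\{5,6\}$. Since $\al_{r-2}$ sits above the range $\{\al_3\sco\al_{r-3}\}$, in every case $\al_k=\al_1$, contradicting $k\ge3$.

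The main obstacle is the concluding case analysis: one must carefully translate $\pd D_{r-1}$ through the non-standard labelling of $\Pi_0$, keeping in mind that the new simple root $\al$ has been adjoined at the spinor end and that the two spinor nodes of $D_n$ ($n\ge4$) enter $\Om$ jointly by graph-automorphism equivalence. Once these identifications are made, the exclusions $k\ne1,2$ together with $\al_k\in\pd\Pi_0\cap\Pi'$ leave no admissible $k$ for any $r>3$.
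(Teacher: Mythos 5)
Your proposal is correct and follows essentially the same route as the paper's own proof: the same subsystem $\Pi':=\{\al_1\sco\al_{r-2}\}$, the same adjoined root $\al=\al_{r-2}+\al_{r-1}+\al_r$ making $\Pi_0$ of type $D_{r-1}$, and the same appeal to Lemma~\ref{PP} together with $\ha{\la|\al}=0$ and dominance of~$\la$. The paper merely reverses the order of the final eliminations (it first gets $i\in\{1,r-2\}$ from $\pd\Pi_0\subs\{\al_1,\al_{r-2},\al\}$ and then kills $i=r-2$, landing on $\la=\ph_1$, which contradicts the hypothesis directly), so your extra exclusion of $k=2$ via the highest root is harmless but unnecessary.
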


\begin{proof} Предположим, что $\ha{\la|\al_i}\ne0$ для некоторого $i\in\{1\sco r-2\}$.

Система простых корней $\Pi':=\{\al_1\sco\al_{r-2}\}\subs\Pi$ принадлежит семейству~$\Pc$ и~удовлетворяет~\eqref{nes}, причём $\ba{\{\la\}\cup\Pi'}=\ha{\al_{r-1}-\al_r}^{\perp}$. Далее, в~пространстве~$\tgt^*$ обозначим через~$\Pi_0$ систему простых корней, соответствующую системе положительных корней $\De^+\cap\ba{\{\la\}\cup\Pi'}$. Для корня $\al:=\al_{r-2}+\al_{r-1}+\al_r\in\De^+$ имеем $h_{\al}=h_{\al_{r-2}}+h_{\al_{r-1}}+h_{\al_r}$, $\ha{\la|\al}=\ha{\la|\al_{r-2}}+\ha{\la|\al_{r-1}}+\ha{\la|\al_r}$, а~$\Pi_0$ есть неразложимая система простых корней типа~$D_{r-1}$, включающая в~себя простые корни $\al_1\sco\al_{r-2},\al$ в~стандартном порядке, откуда $\pd\Pi_0\subs\{\al_1,\al_{r-2},\al\}$. Согласно лемме~\ref{PP}, $i\in\{1,r-2\}$, $\ha{\la|\al_j}=\de_{ij}$ для всякого $j=1\sco r-2$, а~также $\ha{\la|\al}=0$. Поэтому $\ha{\la|\al_{r-2}}+\ha{\la|\al_{r-1}}+\ha{\la|\al_r}=\ha{\la|\al}=0$, и, как следствие,
$\ha{\la|\al_{r-2}}=\ha{\la|\al_{r-1}}=\ha{\la|\al_r}=0$, $i\ne r-2$, $i=1$, что влечёт равенство $\ha{\la|\al_j}=\de_{1j}$ для любого $j=1\sco r$. Значит, $\la=\ph_1$. Получили противоречие.
\end{proof}

\begin{imp}\label{BCD} Допустим, что $\la\notin\De$, а~условие~\eqref{elem} не выполняется. Тогда
\begin{nums}{-1}
\item $r>2$\~
\item если $\Pi=B_r$, то $\la\in\{\ph_{r-1},\ph_r,\ph_1+\ph_r,\ph_2+\ph_r,\ph_1+\ph_2+\ph_r\}$\~
\item если $r>3$ и~$\Pi=B_r$, то $\la\in\{\ph_{r-1},\ph_r,\ph_1+\ph_r,\ph_2+\ph_r\}$\~
\item если $\Pi=C_r$, то $\la\in\{\ph_1,\ph_{r-1},\ph_r,\ph_1+\ph_r,\ph_2+\ph_r,\ph_1+\ph_2+\ph_r\}$\~
\item если $r>3$ и~$\Pi=C_r$, то $\la\in\{\ph_1,\ph_{r-1},\ph_r,\ph_1+\ph_r,\ph_2+\ph_r\}$\~
\item если $r>3$ и~$\Pi=D_r$, то $\la\in\{\ph_1,\ph_{r-1},\ph_r,\ph_1+\ph_{r-1},\ph_1+\ph_r,\ph_{r-1}+\ph_r\}$\~
\item если $r>4$ и~$\Pi=D_r$, то $\la\in\{\ph_1,\ph_{r-1},\ph_r,\ph_1+\ph_{r-1},\ph_1+\ph_r\}$.
\end{nums}
\end{imp}

\begin{proof} Вытекает из теоремы~\ref{submain} и~предложений \ref{Bt}---\ref{Dt}.
\end{proof}

Теперь, проводя подробный анализ на основе теоремы~\ref{submain}, предложения~\ref{adj}, следствия~\ref{BCD}, а~также таблиц \ref{tabldr} и~\ref{tabld1r}, получаем утверждение теоремы~\ref{main}.

Перейдём к~доказательству теорем \ref{main1} и~\ref{main2}.

Предположим, что \ter{простая} линейная алгебра Ли $R\br{[\ggt,\ggt]}$ является присоединённой либо классической.

Пусть $\hgt_{\R}\subs\ggt_{\R}$\т с.п.о.п. линейной алгебры Ли $\ggt_{\R}\subs\slg(V_{\R})$, а~$\tgt_{[\ggt,\ggt]}\subs[\ggt,\ggt]$\т картановская подалгебра алгебры $[\ggt,\ggt]$.

Следующее утверждение является очевидным.

\begin{stm}\label{rak} Если $\rk\hgt_{\R}=1$, то условие~\eqref{elem} выполняется, а~если $\rk\hgt_{\R}>1$, то условие~\eqref{elem} не выполняется.
\end{stm}

Возможны следующие случаи.

\begin{cass}{adg1} Линейная алгебра Ли $R\br{[\ggt,\ggt]}$ является присоединённой простой линейной алгеброй Ли, алгебра~$\ggt$ имеет одномерный центр, а~тавтологическое представление $\ggt\cln V$ совпадает с~$R+R'$. Линейная алгебра Ли $R\br{[\ggt,\ggt]}$ не является локально транзитивной, вследствие чего $\tgt_{[\ggt,\ggt]}\subs\ggt$ есть с.п.о.п. представления~$R$ алгебры~$\ggt$. Имеем $\dim\tgt_{[\ggt,\ggt]}=\rk[\ggt,\ggt]=r-1\ge1$.
Значит, система корней простой алгебры Ли $[\ggt,\ggt]$ относительно её картановской подалгебры~$\tgt_{[\ggt,\ggt]}$ содержит некоторое подмножество,
линейно порождающее гиперплоскость в~$\tgt_{[\ggt,\ggt]}^*$. Поэтому представление~$R'|_{\tgt_{[\ggt,\ggt]}}$ алгебры~$\tgt_{[\ggt,\ggt]}$ обладает вектором с~одномерной стационарной подалгеброй\~ то же можно сказать и~о~тавтологическом представлении $\ggt\cln V$. Таким образом, условие~\eqref{elem} выполняется.
\end{cass}

\begin{cass}{ort1} Линейная алгебра Ли $R\br{[\ggt,\ggt]}$ совпадает с~линейной алгеброй $\sog_n(\Cbb)$, где $n\in\{2r-2,2r-1\}$ и~$n>4$, алгебра~$\ggt$ имеет одномерный центр, а~тавтологическое представление $\ggt\cln V$ совпадает с~$R+R'$. Далее, линейная алгебра Ли $R\br{[\ggt,\ggt]}$ не является локально транзитивной. Поэтому с.п.о.п. тавтологического представления $\ggt\cln V$ есть не что иное как с.п.о.п. представления $(R+R')|_{[\ggt,\ggt]}$ алгебры $[\ggt,\ggt]$, которая, в~свою очередь, изоморфна алгебре Ли $\sog_{n-2}(\Cbb)$ ранга $r-2$. Итак, $\rk\hgt_{\R}=r-2$. Значит, $\rk\hgt_{\R}=1$ при $n=5$ и~$\rk\hgt_{\R}>1$ иначе.
\end{cass}

\begin{cass}{un1} Тавтологическое представление $\ggt_{\R}\cln V_{\R}$ получается из представления
$\Mat_{(r-1)\times(r-1)}(\Hbb)\oplus\Mat_{1\times1}(\Hbb)\cln\Mat_{(r-1)\times1}(\Hbb),\,(A+B)\cln X\to AX-XB$ ($r>2$) ограничением на подалгебру $\br{\un_{r-1}(\Hbb)\oplus\ib\R E}\subs\br{\Mat_{(r-1)\times(r-1)}(\Hbb)\oplus\Mat_{1\times1}(\Hbb)}$\т прямую сумму подалгебр
$\un_{r-1}(\Hbb)\subs\Mat_{(r-1)\times(r-1)}(\Hbb)$ и~$\ib\R E\subs\Mat_{1\times1}(\Hbb)$. Легко видеть, что $\hgt_{\R}\cong\un_{r-2}(\Hbb)\oplus\R$. Отсюда $\rk\hgt_{\R}=r-1>1$.
\end{cass}

\begin{cass}{adg} Линейная алгебра $\ggt_{\R}\subs\slg(V_{\R})$ является присоединённой простой компактной линейной алгеброй Ли. Подалгебра $\hgt_{\R}\subs\ggt_{\R}$ является максимальной коммутативной подалгеброй алгебры~$\ggt_{\R}$, откуда $\rk\hgt_{\R}=\rk\ggt_{\R}=r>1$.
\end{cass}

\begin{cass}{ort} Линейная алгебра $\ggt_{\R}\subs\slg(V_{\R})$ совпадает с~одной из линейных алгебр $\sog_{2r}(\R)$ ($r>2$), $\sog_{2r+1}(\R)$, $\sug_{r+1}(\Cbb)$, $\un_r(\Cbb)$, $\un_r(\Hbb)$. Подалгебра $\hgt_{\R}\subs\ggt_{\R}$ изоморфна алгебре $\sog_{2r-1}(\R)$, $\sog_{2r}(\R)$, $\sug_r(\Cbb)$, $\un_{r-1}(\Cbb)$, $\un_{r-1}(\Hbb)$ соответственно и~имеет ранг, равный $r$ при $\ggt_{\R}=\sog_{2r+1}(\R)$ и~$r-1$ иначе. Отсюда $\rk\hgt_{\R}=1$ при $\ggt_{\R}=\sug_3(\Cbb)$, $\ggt_{\R}=\un_2(\Cbb)$ либо $\ggt_{\R}=\un_2(\Hbb)$ и~$\rk\hgt_{\R}>1$ в~противном случае.
\end{cass}

Теперь для доказательства теорем \ref{main1} и~\ref{main2} достаточно воспользоваться утверждением~\ref{rak}.

\newpage

\section{Справочный раздел}\label{tables}

\lont{>{$}l<{$}|>{$}l<{$}|>{$}l<{$}|}{}{tablId}{\nc{\Pi}\nc{\pd\Pi}\nc{\Inn{\Pi}}}{%
\an{dAr}
A_r,\quad r\ge6 & 1,2,r-1,r & 3\sco r-2
\n{dB4}
B_r,\quad r=3,4 & 1,r & 2\sco r-1
\n{dBr}
B_r,\quad r\ge5 & 1 & 2\sco r
\n{dCr}
C_r,\quad r\ge3 & 1,2 & 3\sco r
\n{dD6}
D_r,\quad r=4,5,6 & 1,r-1,r & 2\sco r-2
\n{dDr}
D_r,\quad r\ge7 & 1 & 2\sco r
\n{dE6}
E_6 & 1,5 & 2,3,4,6
\n{dE7}
E_7 & 1 & 2\sco7
\n{dE8}
E_8 & \es & 1\sco8
\n{dF4}
F_4 & 1 & 2,3,4
\n{dG2}
G_2 & 1 & 2}

\lont{>{$}l<{$}|>{$}l<{$}|>{$}l<{$}|}{}{tablOm}{\nc{\Pi(\al)}\nc{i}\nc{\br{\Pi\sm\{\al_i\}}(\al)}}{%
\an{Ar}
A_r(1) & r & A_{r-1}(1)
\n{Ar2r}
A_r(2) & r & A_{r-1}(2)
\n{Ar21}
A_r(2) & 1 & A_{r-1}(1)
\n{A531}
A_5(3) & 1 & A_4(2)
\n{Br}
B_r(1) & r & A_{r-1}(1)
\n{Brr}
B_r(r),\quad r=3,4 & 1 & B_{r-1}(r-1)
\n{Cr}
C_r(1) & r & A_{r-1}(1)
\n{Cr2r}
C_r(2) & r & A_{r-1}(2)
\n{Cr21}
C_r(2) & 1 & C_{r-1}(1)
\n{Dr}
D_r(1) & r & A_{r-1}(1)
\n{Drrr}
D_r(r),\quad r=5,6 & r-1 & A_{r-1}(r-1)
\n{Drr1}
D_r(r),\quad r=5,6 & 1 & D_{r-1}(r-1)
\n{Er}
E_r(1),\quad r=6,7 & r & A_{r-1}(1)
\n{Er1}
E_r(1),\quad r=6,7 & r-1 & D_{r-1}(1)
\n{F4}
F_4(1) & 4 & C_3(1)}

\lont{>{$}l<{$}|>{$}l<{$}|>{$}l<{$}|>{$}l<{$}|}{}{tablIr}{\nc{\Pi}\mc{2}{\Pi'}\nc{\Inn{\Pi}{}'}}{%
\an{I0Ar3}
A_r,\quad r\ge8 & 3\sco r & A_{r-2} & 5\sco r-2
\n{I0Ar2}
A_r,\quad r\ge8 & 2\sco r-1 & A_{r-2} & 4\sco r-3
\n{I0Ar1}
A_r,\quad r\ge8 & 1\sco r-2 & A_{r-2} & 3\sco r-4
\n{I0B63}
B_r,\quad r=5,6 & 3\sco r & B_{r-2} & 4\sco r-1
\n{I0Br3}
B_r,\quad r\ge7 & 3\sco r & B_{r-2} & 4\sco r
\n{I0Br2}
B_r,\quad r\ge8 & 2\sco r-1 & A_{r-2} & 4\sco r-3
\n{I0Br1}
B_r,\quad r\ge8 & 1\sco r-2 & A_{r-2} & 3\sco r-4
\n{I0Cr3}
C_r,\quad r\ge5 & 3\sco r & C_{r-2} & 5\sco r
\n{I0Cr2}
C_r,\quad r\ge8 & 2\sco r-1 & A_{r-2} & 4\sco r-3
\n{I0Cr1}
C_r,\quad r\ge8 & 1\sco r-2 & A_{r-2} & 3\sco r-4
\n{I0D83}
D_r,\quad r=6,7,8 & 3\sco r & D_{r-2} & 4\sco r-2
\n{I0Dr3}
D_r,\quad r\ge9 & 3\sco r & D_{r-2} & 4\sco r
\n{I0Dr2}
D_r,\quad r\ge8 & 2\sco r-1 & A_{r-2} & 4\sco r-3
\n{I0Dr2r}
D_r,\quad r\ge8 & 2\sco r-2,r & A_{r-2} & 4\sco r-3
\n{I0Dr1}
D_r,\quad r\ge8 & 1\sco r-2 & A_{r-2} & 3\sco r-4
\n{I0E6}
E_6 & 2,3,4,6 & D_4 & 3
\n{I0E73}
E_7 & 3\sco7 & D_5 & 4,5
\n{I0E72}
E_7 & 2\sco5,7 & D_5 & 3,4
\n{I0E83}
E_8 & 3\sco8 & E_6 & 4,5,6,8
\n{I0E828}
E_8 & 2\sco6,8 & D_6 & 3,4,5
\n{I0E818}
E_8 & 1\sco5,8 & A_6 & 3,4
\n{I0E82}
E_8 & 2\sco7 & A_6 & 4,5
\n{I0E81}
E_8 & 1\sco6 & A_6 & 3,4}

\lont{>{$}l<{$}|>{$}l<{$}|}{}{tabldr}{\nc{\Pi}\nc{\pd_r\Pi}}{%
\an{d0Ar}
A_r,\quad r\ge8 & 1,2,r-1,r
\n{d0B6}
B_r,\quad r=5,6 & 1,2,3,r
\n{d0B7}
B_7 & 1,2,3
\n{d0Br}
B_r,\quad r\ge8 & 1,2
\n{d0C7}
C_r,\quad r=5,6,7 & 1,2,3,4
\n{d0Cr}
C_r,\quad r\ge8 & 1,2
\n{d0D7}
D_r,\quad r=6,7 & 1,2,3,r-1,r
\n{d0D8}
D_8 & 1,2,7,8
\n{d0Dr}
D_r,\quad r\ge9 & 1,2
\n{d0E6}
E_6 & 1,2,4,5,6
\n{d0E7}
E_7 & 1,2,6,7
\n{d0E8}
E_8 & 1,2,7}

\lont{>{$}l<{$}|>{$}l<{$}|>{$}l<{$}|>{$}l<{$}|}{}{tablI1r}{\nc{\Pi}\mc{2}{\Pi'}\nc{\Inn{\Pi}{}'}}{%
\an{I1Ar2}
A_{r-1},\quad r\ge8 & 2\sco r-1 & A_{r-2} & 4\sco r-3
\n{I1Ar1}
A_{r-1},\quad r\ge8 & 1\sco r-2 & A_{r-2} & 3\sco r-4
\n{I1B52}
B_{r-1},\quad r=5,6 & 2\sco r-1 & B_{r-2} & 3\sco r-2
\n{I1Br2}
B_{r-1},\quad r\ge7 & 2\sco r-1 & B_{r-2} & 3\sco r-1
\n{I1Br1}
B_{r-1},\quad r\ge8 & 1\sco r-2 & A_{r-2} & 3\sco r-4
\n{I1Cr2}
C_{r-1},\quad r\ge5 & 2\sco r-1 & C_{r-2} & 4\sco r-1
\n{I1Cr1}
C_{r-1},\quad r\ge8 & 1\sco r-2 & A_{r-2} & 3\sco r-4
\n{I1D72}
D_{r-1},\quad r=6,7,8 & 2\sco r-1 & D_{r-2} & 3\sco r-3
\n{I1Dr2}
D_{r-1},\quad r\ge9 & 2\sco r-1 & D_{r-2} & 3\sco r-1
\n{I1Dr1}
D_{r-1},\quad r\ge8 & 1\sco r-2 & A_{r-2} & 3\sco r-4
\n{I1Dr1r}
D_{r-1},\quad r\ge8 & 1\sco r-3,r-1 & A_{r-2} & 3\sco r-4
\n{I1E62}
E_6 & 2\sco6 & D_5 & 3,4
\n{I1E61}
E_6 & 1,2,3,4,6 & D_5 & 2,3
\n{I1E72}
E_7 & 2\sco7 & E_6 & 3,4,5,7
\n{I1E717}
E_7 & 1\sco5,7 & D_6 & 2,3,4
\n{I1E71}
E_7 & 1\sco6 & A_6 & 3,4
\n{I1E82}
E_8 & 2\sco8 & E_7 & 3\sco8
\n{I1E818}
E_8 & 1\sco6,8 & D_7 & 2\sco6,8
\n{I1E81}
E_8 & 1\sco7 & A_7 & 3,4,5
\n{I1F42}
F_4 & 2,3,4 & B_3 & 3
\n{I1F41}
F_4 & 1,2,3 & C_3 & 3}

\lont{>{$}l<{$}|>{$}l<{$}|}{}{tabld1r}{\nc{\Pi}\nc{\pd_r\Pi}}{%
\an{d1Ar}
A_{r-1},\quad r\ge8 & 1,2,r-2,r-1
\n{d1B6}
B_{r-1},\quad r=5,6 & 1,2,r-1
\n{d1Br}
B_{r-1},\quad r\ge7 & 1,2
\n{d1C7}
C_{r-1},\quad r=5,6,7 & 1,2,3
\n{d1Cr}
C_{r-1},\quad r\ge8 & 1,2
\n{d1D7}
D_{r-1},\quad r=6,7,8 & 1,2,r-2,r-1
\n{d1Dr}
D_{r-1},\quad r\ge9 & 1,2
\n{d1E6}
E_6 & 1,5,6
\n{d1E7}
E_7 & 1,6
\n{d1E8}
E_8 & 1
\n{d1F4}
F_4 & 1,2,4}

\newpage

\end{document}